\newtheorem{theorem}{Theorem}[section]
\newtheorem{proposition}[theorem]{Proposition}
\newtheorem{question}[theorem]{Question}
\newtheorem{corollary}[theorem]{Corollary}
\theoremstyle{definition}
\theoremstyle{remark}
\newtheorem{remark}[theorem]{Remark}
\numberwithin{equation}{section}
\newcommand{\frakb}{\mathfrak{b}}
\newcommand{\frakc}{\mathfrak{c}}
\newcommand{\frakx}{\mathfrak{x}}
\newcommand{\fraky}{\mathfrak{y}}
\newcommand{\eps}{\varepsilon}
\newcommand{\N}{\mathbb{N}}
\newcommand{\R}{\mathbb{R}}
\newcommand{\bB}{\mathcal{B}}
\newcommand{\mM}{\mathcal{M}}
\newcommand{\nN}{\mathcal{N}}
\newcommand{\xX}{\mathcal{X}}
\DeclareMathOperator{\cov}{cov}
\DeclareMathOperator{\non}{non}
\newcommand{\sm}{\setminus}
\newcommand{\sub}{\subseteq}
\DeclareMathOperator{\spn}{span}
\begin{document}

\title{A small remark on small-dimensional normed barrelled spaces}


\author[D.\ Sobota]{Damian Sobota}
\address{Kurt G\"{o}del Research Center, Department of Mathematics, University of Vienna, Vienna, Austria.}
\email{ein.damian.sobota@gmail.com}
\urladdr{www.logic.univie.ac.at/~{}dsobota}

\thanks{The author was supported by the Austrian Science Fund (FWF), ESP 108-N}

\begin{abstract}
Combining the methods of Brian and Stuart with the classical Dvoretzky theorem, we show that no infinite-dimensional Banach space contains a barrelled subspace of (algebraic) dimension $<\mbox{cov}(\mathcal{N})$, the covering number of the Lebesgue null ideal $\mathcal{N}$. Consequently, every infinite-dimensional normed barrelled space has dimension $\ge\mbox{cov}(\mathcal{N})$ and so it is consistent with \textsf{ZFC} that no normed barrelled space has dimension equal to the bounding number $\mathfrak{b}$. 
\end{abstract}

\subjclass[2010]{Primary: 03E17, 46A08. Secondary: 46B20.}
\keywords{Barrelled spaces, Banach spaces, dimension, cardinal characteristics of the continuum}

\maketitle

Recall that a topological vector space $E$ is \textit{barrelled} if every absolutely convex closed absorbing subset (\textit{barrel}) of $E$ is a neighborhood of $0$. Barrelled spaces constitute one of the most important classes of topological vector spaces because strong forms of such classical theorems as the Uniform Boundedness Principle or the Open Mapping Theorem hold in them, see \cite{Swa92}. The minimal (algebraic) dimension of infinite-dimensional metrizable barrelled spaces was studied by S\'anchez Ruiz and Saxon in \cite{SRS95,SRS96,SRS96-2}, where it was proved that it is equal to the cardinal characteristic of the continuum called the \textit{bounding number} $\frakb$\footnote{See \cite{BJ95} for the definitions and basic facts concerning the cardinal characteristics $\frakb$, $\non(\mM)$, and $\cov(\nN)$, used in this note.}. 
In \cite{SRS95} they also defined the following cardinal number:
\[\fraky=\min\big\{\kappa\ge\aleph_0\colon\ \text{there is a normed barrelled space of dimension }\kappa\big\},\]
and asked whether it is equal to any standard cardinal characteristic of the continuum. Of course, we have $\fraky\ge\frakb$. The author proved in \cite{Sob19} that consistently $\fraky$ may be strictly smaller than the continuum $\frakc$, i.e. the cardinality of the real line $\R$, which implies that in \textsf{ZFC}, the standard system of axioms of set theory, one cannot prove that $\fraky=\frakc$ (see also \cite{SZ17,SZ19}). 

Brian and Stuart \cite{BS24} recently showed that every infinite-dimensional separable Banach space contains a barrelled subspace of dimension $\non(\mM)$, the minimal cardinality of a non-meager subset of $\R$. It follows in particular that $\fraky\le\non(\mM)$. Moreover, they also proved that if the dual space $E^*$ of a Banach space $E$ contains an isomorphic copy of a Banach space $F\in\{c_0\}\cup\{\ell_p\colon 1\le p<\infty\}$, then no barrelled subspace of $E$ has dimension strictly less than $\cov(\nN)$, the minimal cardinality of a family of the Lebesgue null sets whose union is the interval $[0,1]$ (\cite[Theorem 4.11]{BS24}). The question whether the conclusion of the latter statement holds for all Banach spaces $E$ was left open.

\begin{question}[{\cite[Question 4.13]{BS24}}]\label{question}
Does every barrelled subspace of an infinite-dimensional Banach space have dimension $\ge\cov(\nN)$? In particular, $\fraky\ge\cov(\nN)$?
\end{question}

The aim of this note is to answer Question \ref{question} in affirmative and hence to complete Brian and Stuart's work---see Theorem \ref{thm:main} and Corollary \ref{cor:covN}. Moreover, in Corollary \ref{cor:b_covN_y} we get that the inequality $\frakb<\fraky$ is consistent with \textsf{ZFC}, which sheds new light on the question of  S\'anchez Ruiz and Saxon.

\medskip

Let us briefly discuss the argument. The proof of aforementioned \cite[Theorem 4.11]{BS24} is divided into two cases: separately for $F=\ell_1$ and $F\in\{c_0\}\cup\{\ell_p\colon 1<p<\infty\}$. To establish the second case, the authors of \cite{BS24} introduce a property of Banach spaces denoted $(\dagger)$, prove that $c_0$ and $\ell_p$ for $1<p<\infty$ satisfy $(\dagger)$, and finally show that $(\dagger)$ implies the required conclusion concerning the dimension of barrelled subspaces. However, an analysis of their proof demonstrates that what is in fact used to obtain the last result is the following property of Banach spaces $E$, which we call $(\dagger')$ and which is formally weaker than $(\dagger)$ (see Remark \ref{rem:on_proofs}):
\begin{itemize}[$(\dagger')$]
    \item there are a sequence $(e_k^*)_{k\in\N}$ of unit functionals in $E^*$ and a partition of $\N=\bigcup_{m\in\N}I_m$ into finite intervals such that
    $$\sum_{m\in\N}\frac{\big|\big\{i\in I_m\colon\ |e_i^*(e)|\ge1/m\big\}\big|}{|I_m|}<\infty$$
    for every unit vector $e\in E$.
\end{itemize}
(Here, as usual, $\N$ denotes the set of positive integers, i.e. $\N=\{1,2,3,...\}$.)

It turns out that the property $(\dagger')$ is actually satisfied by \textit{every} infinite-dimensional Banach space, including $\ell_1$, which follows from the classical Dvoretzky theorem asserting that the space $\ell_2$ is \textit{finitely representable} in every infinite-dimensional Banach space $E$, that is, for each finite-dimensional subspace $F$ of $\ell_2$ and each $\eps>0$ there are a finite-dimensional subspace $G$ of $E$ and a surjective isomorphism $T\colon F\to G$ with $\|T\|=1$ and $\|T^{-1}\|<1+\eps$ (cf. \cite[Theorem 12.3.13 and Remark 12.1.3.(a)]{AK16}); see Proposition \ref{prop:daggerprim}. Consequently, by using the arguments presented in the proofs of \cite[Theorems 4.3 and 4.9]{BS24}, in Theorem \ref{thm:main} we get that every barrelled subspace of an infinite-dimensional Banach space must have dimension at least $\cov(\nN)$, which basically answers Question \ref{question}.

\medskip

We are ready to present the proofs.

\begin{proposition}\label{prop:daggerprim}
    Every infinite-dimensional Banach space satisfies $(\dagger')$.
\end{proposition}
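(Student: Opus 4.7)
The plan is to apply Dvoretzky's theorem not to $E$ itself but to its dual $E^*$, which is also an infinite-dimensional Banach space (the canonical embedding $E\hookrightarrow E^{**}$ forces $\dim E^{**}=\infty$, hence $\dim E^*=\infty$). The virtue of this dualization is that the near-Hilbert structure obtained in $E^*$ will automatically control the values $\psi(e)$ for $\psi$ in the Dvoretzky subspace and $e\in E$ a unit vector, with no Hahn--Banach extension step required.

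I first partition $\N$ into consecutive finite intervals $(I_m)_{m\in\N}$ of lengths $|I_m|=m^4$; any growth making $\sum_m m^2/|I_m|<\infty$ will do. For each $m$, finite representability of $\ell_2$ in $E^*$ (applied with $F=\ell_2^{|I_m|}$ and $\eps=1$) furnishes a $|I_m|$-dimensional subspace $H_m\sub E^*$ together with a surjective isomorphism $T_m\colon\ell_2^{|I_m|}\to H_m$ satisfying $\|T_m\|=1$ and $\|T_m^{-1}\|<2$. Writing $(f_i)_{i\in I_m}$ for an enumeration of the standard orthonormal basis of $\ell_2^{|I_m|}$, I set $\psi_i^{(m)}=T_m(f_i)\in E^*$; the two inequalities above give $1/2<\|\psi_i^{(m)}\|\le 1$. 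Finally I take $e_i^*=\psi_i^{(m)}/\|\psi_i^{(m)}\|$ for $i\in I_m$, obtaining the desired sequence of unit functionals in $E^*$.

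The crux is the estimate
\[\sum_{i\in I_m}\big|\psi_i^{(m)}(e)\big|^2\le 1\]
for every unit vector $e\in E$. This follows by a one-line Riesz argument: for any scalars $(c_i)_{i\in I_m}$,
\[\Big|\sum_{i\in I_m}c_i\,\psi_i^{(m)}(e)\Big|=\big|T_m\big((c_i)\big)(e)\big|\le\|T_m\big((c_i)\big)\|_{E^*}\le\|(c_i)\|_{\ell_2^{|I_m|}},\]
so the linear functional $(c_i)\mapsto\sum_i c_i\psi_i^{(m)}(e)$ on $\ell_2^{|I_m|}$ has norm at most $1$, giving the claim. Since $1/\|\psi_i^{(m)}\|<2$, we then obtain $\sum_{i\in I_m}|e_i^*(e)|^2<4$, and Chebyshev yields $|\{i\in I_m\colon|e_i^*(e)|\ge 1/m\}|<4m^2$. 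Summing over $m$ gives
\[\sum_{m\in\N}\frac{\big|\{i\in I_m\colon|e_i^*(e)|\ge 1/m\}\big|}{|I_m|}<4\sum_{m\in\N}\frac{m^2}{m^4}<\infty,\]
as required.

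The main obstacle, had I worked in $E$ directly, would be the lack of canonical Hahn--Banach extensions: applying Dvoretzky to $E$ yields a subspace $G_m\sub E$ close to $\ell_2^{|I_m|}$ with natural ``coordinate functionals'' on $G_m$, but extending these to $E^*$ leaves their values on vectors outside $G_m$ essentially arbitrary, so no $\ell_2$-type bound on $(e_i^*(e))$ survives. Dualizing---passing to $E^*$---bypasses this issue entirely, since the operator norm inequality $\|T_m\|\le 1$ on $\ell_2^{|I_m|}$ translates immediately into the Bessel-type bound on $(\psi_i^{(m)}(e))_{i\in I_m}$ used above.
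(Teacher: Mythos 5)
Your proof is correct and follows essentially the same route as the paper: both apply Dvoretzky's theorem to the dual $E^*$, use intervals of length $m^4$, and normalize the images of the orthonormal basis vectors to get the unit functionals $e_i^*$. The only divergence is the counting step, where the paper sums the functionals over the large-value sets $J_m^{\pm}$ and plays the Hilbertian upper bound $2\sqrt{|J_m^{\pm}|}$ against the evaluation lower bound $|J_m^{\pm}|/m$, whereas you derive the Bessel-type bound $\sum_{i\in I_m}|\psi_i^{(m)}(e)|^2\le 1$ from $\|T_m\|\le 1$ via Riesz representation and then apply Chebyshev --- a minor, slightly cleaner variation yielding $4m^2$ in place of $8m^2$.
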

\begin{proof}
    Let $E$ be an infinite-dimensional Banach space. Let $(\tilde{e}_n)_{n\in\N}$ be the standard basis of $\ell_2$, i.e. for $j\neq n\in\N$ we have $\tilde{e}_n(j)=0$ and $\tilde{e}_n(n)=1$.
    
    For every $m\in\N$ set $E_m=\spn\big\{\tilde{e}_1,\ldots,\tilde{e}_{m^4}\big\}$, so $E_m$ is an $m^4$-dimensional subspace of $\ell_2$, and let $T_m\colon E_m\to F_m$ be an isomorphism onto a subspace $F_m$ in the dual space $E^*$ given by the Dvoretzky theorem with $\|T_m\|=1$ and $\big\|T_m^{-1}\big\|<2$. For each $m\in\N$ and $1\le i\le m^4$, let 
    \[\alpha_{m,i}=\big\|T_m(\tilde{e}_i)\big\|^{-1},\]
    so $\alpha_{m,i}<2$, and
    \[f_{m,i}^*=\alpha_{m,i}T_m(\tilde{e}_i),\]
    hence $\big\|f_{m,i}^*\big\|=1$. Let $\N=\bigcup_{m\in\N}I_m$ be a partition of $\N$ into consecutive intervals such that $|I_m|=m^4$. For each $k\in\N$, let $m_k\in\N$ and $1\le i_k\le m_k^4$ be such that $k=i_k+\sum_{j=1}^{m_k-1}j^4$, and set 
    \[e_k^*=f_{m_k,i_k}^*.\]

    We claim that the sequences $(e_k^*)_{k\in\N}$ and $(I_m)_{m\in\N}$ witness that $E$ satisfies $(\dagger')$. To this end, fix a unit vector $e\in E$. Let $m\in\N$. Put
    \[J_m^+=\big\{k\in I_m\colon e_k^*(e)\ge1/m\big\},\quad J_m^-=\big\{k\in I_m\colon -e_k^*(e)\ge1/m\big\},\quad J_m=J_m^+\cup J_m^-.\]
    We have
    \[\bigg\|\sum_{k\in J_m^{\pm}}e_k^*\bigg\|=\bigg\|\sum_{k\in J_m^{\pm}}f_{m,i_k}^*\bigg\|\le\|T_m\|\cdot\bigg\|\sum_{k\in J_m^{\pm}}\alpha_{m,i_k}\tilde{e}_{i_k}\bigg\|_{\ell_2}=\bigg(\sum_{k\in J_m^{\pm}}\alpha_{m,i_k}^2\bigg)^{1/2}\le2\sqrt{\big|J_m^{\pm}\big|}.\]
    On the other hand,
    \[\bigg\|\sum_{k\in J_m^+}e_k^*\bigg\|\ge\bigg|\sum_{k\in J_m^+}e_k^*(e)\bigg|=\sum_{k\in J_m^+}e_k^*(e)\ge\big|J_m^+\big|/m,\]
    and similarly
    \[\bigg\|\sum_{k\in J_m^-}e_k^*\bigg\|\ge\bigg|\sum_{k\in J_m^-}e_k^*(e)\bigg|=-\sum_{k\in J_m^-}e_k^*(e)\ge\big|J_m^-\big|/m.\]
    Combining the above inequalities, we get
     \[\big|J_m^{\pm}\big|/m\le\bigg\|\sum_{k\in J_m^{\pm}}e_k^*\bigg\|\le2\sqrt{\big|J_m^{\pm}\big|},\]
     hence, rearranging,
     \[\big|J_m^{\pm}\big|\le4m^2,\]
     and finally
     \[|J_m|=\big|J_m^+\big|+\big|J_m^-\big|\le8m^2.\]

     Using the latter inequality for every $m\in\N$, we get
     \[\sum_{m\in\N}\frac{\big|\big\{k\in I_m\colon\ |e_k^*(e)|\ge1/m\big\}\big|}{|I_m|}=\sum_{m\in\N}\frac{|J_m|}{|I_m|}\le\sum_{m\in\N}\frac{8m^2}{m^4}=8\sum_{m\in N}1/m^2<\infty,\]
     which finishes the proof of the proposition.
\end{proof}

\begin{theorem}\label{thm:main}
    Assume that $A$ is a subset of an infinite-dimensional Banach space $E$ such that the space $\spn A$ is barrelled. Then, $|A|\ge\cov(\nN)$.
\end{theorem}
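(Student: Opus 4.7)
The plan is to argue by contradiction, following the template of the proofs of \cite[Theorems 4.4 and 4.10]{BS24} but substituting Proposition \ref{prop:daggerprim} for the authors' property $(\dagger)$. Suppose that $\spn A$ is barrelled and $|A| < \cov(\nN)$; then $A$ is necessarily infinite (since $\cov(\nN) > \aleph_0$), and after rescaling we may assume that every $a \in A$ is a unit vector.

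I would apply Proposition \ref{prop:daggerprim} to obtain a sequence $(e_k^*)_{k\in\N}$ of unit functionals in $E^*$ together with a partition $\N = \bigcup_{m\in\N} I_m$ witnessing $(\dagger')$, and endow the product $X = \prod_m I_m$ with the product of uniform probability measures. Since $X$ is Borel-isomorphic as a measure space to the standard Lebesgue probability space, the covering number of its null ideal equals $\cov(\nN)$. For each $a \in A$, writing $J_m(a) = \{k \in I_m : |e_k^*(a)| \geq 1/m\}$, the series $\sum_m |J_m(a)|/|I_m|$ converges by $(\dagger')$, so the Borel--Cantelli lemma shows that
\[ N_a = \bigl\{(k_m) \in X : k_m \in J_m(a) \text{ for infinitely many } m\bigr\} \]
is null. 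Because $|A| < \cov(\nN)$, the union $\bigcup_{a \in A} N_a$ is a proper subset of $X$; choose any $(k_m)$ in its complement, so that for every $a \in A$ one has $|e_{k_m}^*(a)| < 1/m$ for all sufficiently large $m$.

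Next, I would observe that the sequence $(m\cdot e_{k_m}^*|_{\spn A})_{m\in\N}$ of continuous linear functionals on the normed space $\spn A$ is pointwise bounded: for each unit $a \in A$ the sequence $(m|e_{k_m}^*(a)|)_m$ has only finitely many terms exceeding $1$, and this pointwise boundedness propagates to all of $\spn A$ by linearity. Since $\spn A$ is barrelled, the Banach--Steinhaus principle yields a constant $C$ with $\|e_{k_m}^*|_{\spn A}\| \leq C/m$ for every $m$.

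The main obstacle is to turn this single estimate into a contradiction, and here the argument must go beyond one fixed choice of $(k_m)$. Following the strategy of \cite[Theorems 4.4 and 4.10]{BS24}, the point is that the preceding conclusion holds for every $(k_m)$ outside the set $\bigcup_{a} N_a$, and so by a Fubini/subadditivity argument some level set $S_M = \prod_m \{k \in I_m : m\|e_k^*|_{\spn A}\| \leq M\}$ must carry positive product measure; equivalently, $\sum_m |\{k \in I_m : m\|e_k^*|_{\spn A}\| > M\}|/|I_m| < \infty$. In other words, on a density-one portion of each $I_m$, the functional $e_k^*$ has restriction to $\spn A$ of norm $O(1/m)$. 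This is what should fail: since $(e_k^*)_{k\in I_m}$ was constructed in Proposition \ref{prop:daggerprim} from a Dvoretzky embedding of $\ell_2^{|I_m|}$ into $E^*$, the same Dvoretzky mechanism applied inside the infinite-dimensional Banach space $\overline{\spn A}$ supplies enough unit vectors in $\spn A$ on which too many $e_k^*$'s act nontrivially to be compatible with such a uniform decay, yielding the desired contradiction.
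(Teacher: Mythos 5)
Your first three paragraphs reproduce the paper's argument (which itself follows \cite[Theorem 4.10]{BS24}): the product measure on $\prod_m I_m$, the Borel--Cantelli estimate showing each $N_a$ is null, the choice of $(k_m)$ outside $\bigcup_{a\in A}N_a$ using $|A|<\cov(\nN)$, and Banach--Steinhaus on the barrelled space $\spn A$ giving $\big\|e_{k_m}^*\rstr_{\spn A}\big\|\le C/m$. Up to that point you are exactly on the paper's track. The problem is your last paragraph. The ``obstacle'' you identify is real---the bound $\big\|e_{k_m}^*\rstr_{\spn A}\big\|\le C/m$ is not by itself absurd, since the $e_k^*$ are unit functionals on $E$ and their restrictions to $\spn A$ could a priori be small or even zero---but the resolution is a one-line reduction that you skip, not a new argument: replace $E$ by $\overline{\spn A}$ at the outset. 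Because Proposition \ref{prop:daggerprim} applies to \emph{every} infinite-dimensional Banach space (this is precisely the paper's improvement over \cite{BS24}, where $(\dagger)$ had to be verified space by space), the whole construction can be run inside $\overline{\spn A}$, where $\spn A$ is dense; then $\big\|e_{k_m}^*\rstr_{\spn A}\big\|=\|e_{k_m}^*\|=1$, and $1\le C/m$ fails for $m>C$. This density reduction is what the paper's appeal to \cite[Lemma 4.1]{BS24} encodes.

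Your substitute for this step does not close the gap. Even granting that some level set $S_M$ has positive measure---which you assert via an unspecified ``Fubini/subadditivity argument''; it can be justified, but only by observing that the complement of $\bigcup_M S_M$ is a \emph{measurable} set covered by fewer than $\cov(\nN)$ null sets and hence null---the resulting statement, that $\big\|e_k^*\rstr_{\spn A}\big\|=O(1/m)$ on a density-one portion of each $I_m$, is simply not contradictory when $\spn A$ is not dense. The Dvoretzky subspaces $F_m\sub E^*$ used to build the $e_k^*$ were chosen with no reference to $A$; nothing prevents them from lying in the annihilator of $\spn A$, in which case every $e_k^*$ vanishes on $\spn A$ and no contradiction is available. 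A ``second Dvoretzky application inside $\overline{\spn A}$'' produces vectors bearing no relation to the functionals already fixed, so the final sentence of your proposal is an unproven (and, as stated, unprovable) assertion. The correct repair is the density reduction above, after which no further use of Dvoretzky's theorem is needed.
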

\begin{proof}
    The proof is the same as of \cite[Theorem 4.9]{BS24}, so for the sake of completeness we only sketch it.

    By Proposition \ref{prop:daggerprim} the space $E$ satisfies $(\dagger')$, so let $(e_k^*)_{k\in\N}$ and $(I_m)_{m\in\N}$ be sequences given by $(\dagger')$ for $E$. Endow each set $I_m$ with the discrete topology, and let $P$ be the standard probability product measure on the compact space $\Omega=\prod_{m\in\N}I_m$, i.e.
    \[P\big(\{f\in\Omega\colon f(m)\in F\}\big)=|F|/|I_m|\]
    for every $m\in\N$ and $F\sub I_m$. Note that by \cite[Theorem 17.41]{Kec95} the measure space $(\Omega,\Sigma,P)$, where $\Sigma$ is the Borel $\sigma$-field of $\Omega$, is measure-preserving isomorphic to the measure space $([0,1],\bB,\lambda)$, where $\bB$ is the standard Borel $\sigma$-field of $[0,1]$ and $\lambda$ denotes the Lebesgue measure on $[0,1]$; consequently, if $\xX$ is a collection of $P$-null subsets of $\Omega$ with $|\xX|<\cov(\nN)$, then $\Omega\sm\bigcup\xX\neq\emptyset$.
    
    For every $f\in\Omega$ and $m\in\N$ set
    \[y_{f,m}^*=me_{f(m)}^*,\]
    so that $\big\|y_{f,m}^*\big\|=m$, and hence $\sup_{m\in\N}\big\|y_{f,m}^*\big\|=\infty$.

    We claim that for every $x\in A$ it holds
    \[\tag{$*$}P\Big(\Big\{f\in\Omega\colon \sup_{m\in\N}\big|y_{f,m}^*(x)\big|=\infty\Big\}\Big)=0.\]
    To see this, let $x\in A$ be non-zero. For all $M\in\N$ we have
    \[P\Big(\Big\{f\in\Omega\colon \sup_{m\in\N}\big|y_{f,m}^*(x)\big|=\infty\Big\}\Big)=P\Big(\Big\{f\in\Omega\colon \sup_{m>M}\big|y_{f,m}^*(x)\big|=\infty\Big\}\Big)\le\]
    \[\le P\Big(\Big\{f\in\Omega\colon \big|y_{f,m}^*(x)\big|\ge\|x\|\text{ for some }m>M\Big\}\Big)=\]
    \[=P\Big(\Big\{f\in\Omega\colon \big|e_{f(m)}^*\big(x/\|x\|\big)\big|\ge1/m\text{ for some }m>M\Big\}\Big)\le\]
    \[\le\sum_{m>M}P\Big(\Big\{f\in\Omega\colon \big|e_{f(m)}^*\big(x/\|x\|\big)\big|\ge1/m\Big\}\Big)=\]
    \[=\sum_{m>M}P\Big(\Big\{f\in\Omega\colon f(m)\in\big\{i\in I_m\colon \big|e_{i}^*\big(x/\|x\|\big)\big|\ge1/m\big\}\Big\}\Big)=\]
    \[=\sum_{m>M}\frac{\big|\big\{i\in I_m\colon \big|e_{i}^*\big(x/\|x\|\big)\big|\ge1/m\big\}\big|}{|I_m|},\]
    which, by $(\dagger')$, 
    converges to $0$ as $M\to\infty$.

    The claim implies that $|A|\ge\cov(\nN)$. If not, then by ($*$) 
    we have
    \[\Omega\sm\bigcup_{x\in A}\Big\{f\in\Omega\colon \sup_{m\in\N}\big|y_{f,m}^*(x)\big|=\infty\Big\}\neq\emptyset,\]
    that is, there is $g\in\Omega$ such that $\sup_{m\in\N}\big|y_{g,m}^*(x)\big|<\infty$ for every $x\in A$. However, since the space $\spn A$ is barrelled, it follows from  \cite[Lemma 4.1]{BS24} that there is $x_0\in A$ for which we have $\sup_{m\in\N}\big|y_{g,m}^*(x_0)\big|=\infty$, which is a contradiction.
\end{proof}

\begin{remark}\label{rem:on_proofs}
As mentioned, the proof of Theorem \ref{thm:main} closely follows the argument presented in the proof of \cite[Theorem 4.9]{BS24}, whose statement formally requires the Banach space $E$ to satisfy a property denoted $(\dagger)$, stronger than $(\dagger')$. However, in fact only property $(\dagger')$ is used in the proof of \cite[Theorem 4.9]{BS24}, not $(\dagger$). More precisely, the reasoning to establish \cite[Theorem 4.9]{BS24} goes exactly as follows: the authors first prove in \cite[Lemma 4.8]{BS24} that $(\dagger)$ implies $(\dagger')$, then---at the very beginning of the proof of \cite[Theorem 4.9]{BS24}---they invoke the lemma to assert that $E$ satisfies $(\dagger')$, and, finally, they proceed with the argument exactly as presented in the proof of Theorem \ref{thm:main} to establish equality ($*$) and so to contradict the barrelledness of $\spn A$.
\end{remark}

Since the isometric image of a normed barrelled space into its completion is still barrelled, Theorem \ref{thm:main} immediately yields the following corollary.

\begin{corollary}\label{cor:covN}
    Every infinite-dimensional normed barrelled space has dimension at least $\cov(\nN)$.
\end{corollary}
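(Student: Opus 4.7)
The plan is to reduce Corollary \ref{cor:covN} to Theorem \ref{thm:main} by embedding a normed barrelled space isometrically into its completion, which is a Banach space. Specifically, let $X$ be an infinite-dimensional normed barrelled space, and let $E=\widehat{X}$ denote its completion, which is automatically a Banach space. The canonical isometric embedding $\iota\colon X\hookrightarrow E$ exhibits $X$ (more precisely, $\iota(X)$) as a dense subspace of $E$, and in particular $E$ is infinite-dimensional because it contains the infinite-dimensional subspace $\iota(X)$.

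Next I would verify the parenthetical remark preceding the corollary: the image $\iota(X)$ is itself barrelled as a topological vector space. Since $\iota\colon X\to\iota(X)$ is an isometric linear bijection, the barrels of $\iota(X)$ correspond bijectively (via $\iota^{-1}$) with the barrels of $X$, and neighborhoods of $0$ correspond similarly. Thus $\iota(X)$ inherits barrelledness directly from $X$. This is the only non-trivial verification, and it is routine.

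Finally I would apply Theorem \ref{thm:main}. Let $A\subseteq\iota(X)\subseteq E$ be a Hamel basis of $\iota(X)$, so that $\spn A=\iota(X)$ is barrelled inside the infinite-dimensional Banach space $E$. Theorem \ref{thm:main} then yields $|A|\geq\cov(\nN)$. Since the Hamel dimension of $X$ equals $|A|$, we conclude $\dim(X)\geq\cov(\nN)$, as required.

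The main (and essentially only) substantive point is the passage-from-normed-to-Banach step, i.e.\ the observation that barrelledness transfers across an isometric isomorphism onto a (not necessarily closed) subspace of the completion; once this is in hand, the corollary follows immediately from Theorem \ref{thm:main}. No additional cardinal-characteristic input is needed.
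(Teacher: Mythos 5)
Your proof is correct and is essentially the paper's own argument: the paper derives the corollary in one line by noting that the isometric image of a normed barrelled space in its completion is still barrelled and then applying Theorem \ref{thm:main} to a Hamel basis of that image. Your write-up just makes the same reduction explicit.
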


Combining Corollary \ref{cor:covN} with the aforementioned results from \cite{BS24} and \cite{SRS95}, we get the following estimates for $\fraky$.

\begin{corollary}\label{cor:y_covN}
    $\max\!\big(\frakb,\cov(\nN)\big)\le\fraky\le\non(\mM)$.
\end{corollary}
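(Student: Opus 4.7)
The proof is purely a matter of assembling three inequalities already available in the paper, so the plan is to cite each one explicitly and observe that together they yield the claimed bounds. By the definition of $\fraky$ as the minimal algebraic dimension of an infinite-dimensional normed barrelled space, both lower and upper bounds come from producing witnessing examples.

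For the lower bound $\frakb\le\fraky$, I would invoke the result of S\'anchez Ruiz and Saxon \cite{SRS95,SRS96,SRS96-2} stating that every infinite-dimensional metrizable barrelled space has dimension at least $\frakb$, together with the trivial observation that every normed space is metrizable. This was already recorded in the introduction. For the lower bound $\cov(\nN)\le\fraky$, I would simply apply Corollary \ref{cor:covN}, which we have just established. Taking the maximum yields $\max\!\big(\frakb,\cov(\nN)\big)\le\fraky$.

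For the upper bound $\fraky\le\non(\mM)$, I would cite the theorem of Brian and Stuart \cite{BS24} guaranteeing that every infinite-dimensional separable Banach space (say $c_0$) contains a barrelled subspace of dimension exactly $\non(\mM)$. Equipped with the restricted norm, such a subspace is itself an infinite-dimensional normed barrelled space, and hence witnesses $\fraky\le\non(\mM)$.

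Since each of the three ingredients is already stated and referenced in the text, there is essentially no obstacle, and no real calculation to perform; the only substantive step is the one carried out in Corollary \ref{cor:covN}. The proof thus reduces to a single sentence chaining the three cited inequalities.
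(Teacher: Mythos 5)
Your proposal is correct and matches the paper's own (implicit) argument exactly: the paper derives the corollary by combining Corollary \ref{cor:covN} with the S\'anchez Ruiz--Saxon bound $\frakb\le\fraky$ and the Brian--Stuart result giving $\fraky\le\non(\mM)$, precisely the three ingredients you chain together. No gaps.
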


Both the inequality $\frakb<\cov(\nN)$ as well as the inequality $\cov(\nN)<\frakb$ is consistent with \textsf{ZFC}, see \cite[Sections 7.5--7.6]{BJ95}. Thus, by Corollary \ref{cor:y_covN}, we have the following consistency result.

\begin{corollary}\label{cor:b_covN_y}
    Let $\frakx\in\!\big\{\frakb,\cov(\nN)\big\}$. It is consistent with \textup{\textsf{ZFC}} that no infinite-dimensional Banach space contains a barrelled subspace of dimension $\frakx$, and so that $\frakx<\fraky$.
\end{corollary}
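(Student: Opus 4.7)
The plan is to observe that the statement follows directly from Corollary \ref{cor:y_covN} together with the two consistency results cited from \cite[Sections 7.5--7.6]{BJ95}, namely that both $\frakb < \cov(\nN)$ and $\cov(\nN) < \frakb$ are consistent with \textsf{ZFC}. The key translation step is that, by the definition of $\fraky$, the statement ``no infinite-dimensional Banach space contains a barrelled subspace of dimension $\frakx$'' is equivalent to $\frakx < \fraky$: any barrelled subspace of a Banach space is a normed barrelled space, and conversely any normed barrelled space embeds isometrically into its completion (a Banach space) as a barrelled subspace of the same dimension.

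Given this translation, I would split into two cases. If $\frakx = \frakb$, I would work in a model of \textsf{ZFC} in which $\frakb < \cov(\nN)$; Corollary \ref{cor:y_covN} then gives $\fraky \ge \cov(\nN) > \frakb = \frakx$, so $\frakx < \fraky$ as required. If $\frakx = \cov(\nN)$, I would work in a model in which $\cov(\nN) < \frakb$; Corollary \ref{cor:y_covN} then yields $\fraky \ge \frakb > \cov(\nN) = \frakx$, and again $\frakx < \fraky$.

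There is no substantive obstacle here, since the proof is purely formal given the stated results. All of the technical content has already been absorbed into Theorem \ref{thm:main} (the lower bound $\cov(\nN) \le \fraky$), the S\'anchez Ruiz--Saxon theorem (the lower bound $\frakb \le \fraky$), and the classical forcing constructions from \cite{BJ95} that separate $\frakb$ and $\cov(\nN)$ in either direction.
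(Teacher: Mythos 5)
Your proposal is correct and matches the paper's own (one-line) argument: the paper likewise derives the corollary from Corollary \ref{cor:y_covN} together with the consistency of $\frakb<\cov(\nN)$ and of $\cov(\nN)<\frakb$ from \cite[Sections 7.5--7.6]{BJ95}. The case split and the translation via the completion are exactly the intended reasoning.
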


By \cite[Theorem 6]{SRS95}, there is a metrizable barrelled space of dimension $\frakb$. This, together with Corollary \ref{cor:covN}, yields our last result.

\begin{corollary}
    In any model of \textup{\textsf{ZFC}}$+``\frakb<\cov(\nN)"$, there is an infinite-dimensional metrizable barrelled space of dimension strictly less than the dimension of any infinite-dimensional normed barrelled space. 
\end{corollary}

We finish the note by asking the following question (cf. \cite[Question 4.14]{BS24}).

\begin{question}
Does the equality $\fraky=\non(\mM)$ hold in \textup{\textsf{ZFC}}?
\end{question}

\section*{Acknowledgement}

The author would like to thank Professor Anna Pelczar-Barwacz for a valuable discussion concerning the Dvoretzky theorem.

\end{document}